\definecolor{webgreen}{rgb}{0,.5,0}
\definecolor{webbrown}{rgb}{.6,0,0}
\newcommand{\seqnum}[1]{\href{https://oeis.org/#1}{\rm \underline{#1}}}
\def\Zee{\mathbb{Z}}
\def\Enn{\mathbb{N}}
\begin{document}

\theoremstyle{plain}
\newtheorem{theorem}{Theorem}
\newtheorem{corollary}[theorem]{Corollary}
\newtheorem{lemma}[theorem]{Lemma}
\newtheorem{proposition}[theorem]{Proposition}

\theoremstyle{definition}
\newtheorem{definition}[theorem]{Definition}
\newtheorem{example}[theorem]{Example}
\newtheorem{conjecture}[theorem]{Conjecture}

\theoremstyle{remark}
\newtheorem{remark}[theorem]{Remark}

\begin{center}
\vskip 1cm{\LARGE\bf A $2$-Regular Sequence That Counts The Divisors of $n^2 + 1$
}
\vskip 1cm
\large
Anton Shakov\\
Department of Mathematics and Statistics\\
Queen's University\\
48 University Avenue\\
Kingston, ON K7L 3N6\\
Canada\\
\href{mailto:anton.shakov@queensu.ca}{\tt anton.shakov@queensu.ca} \\
\end{center}

\vskip .2 in
\begin{abstract}
We introduce the $2$-regular integer sequence \seqnum{A383066} $= (s(n))_{n \geq 1}$,
which begins $0, 1, 1, 2, 3, 3, 2, \ldots$.
We prove that the number of occurrences of an integer $m \geq 0$ in this sequence is equal to $\tau(m^2+1)$, the number of divisors of $m^2 + 1$. Using this fact, we give a generating function for $\tau(m^2+1)$. We also discuss other interesting properties of $s(n)$, including its relationship to the Fibonacci sequence.
\end{abstract}

\section{Introduction and proof of the main result}

We begin by recalling the definition of $k$-regular sequences, which were introduced by Allouche and Shallit \cite{AS1} as a generalization of automatic sequences \cite{AS2}. 

\begin{definition}
A sequence \(s(n)\) is \(k\)-regular if there exists an integer \(E\) such that, for all \(e_j > E\) and \(0 \leq r_j \leq k^{e_j} - 1\), every subsequence of \(s\) of the form \(s(k^{e_j}n + r_j)\) is expressible as an $\Zee$-linear combination 
\[
\sum_{i}c_{ij}s(k^{f_{ij}}n+b_{ij}),
\]
where \(f_{ij} \leq E\), and \(0 \leq b_{ij} \leq k^{f_{ij}} - 1.\)
\end{definition}

In the previous definition, the integers $\Zee$ can be replaced by any commutative Noetherian ring $R'$, in which case we would say that $s(n)$ is $(R', k)$-regular. However, for the purposes of this paper, we consider only integer sequences. We begin by giving some well-known examples of $2$-regular integer sequences. 

\begin{example}
 The $2$-adic valuation of a positive integer $n$ \seqnum{A007814}, defined by $v_2(n) := \sup\{k \in \Enn_0: 2^k \mid n \}$ is a $2$-regular sequence, since it satisfies the recursions
\[
\begin{cases}
v_2(2k+1) = 0\\
v_2(2k) = v_2(k)+1 \\
\end{cases}
\]
with initial condition $v_2(1)=0$.
\end{example}

\begin{example}
The Cantor sequence \seqnum{A005823} is a $2$-regular sequence which consists of integers whose ternary expansions contain no $1$s. The Cantor sequence $c(n)$ satisfies the recursions 
\[
\begin{cases}
c(2k) = 3c(k)+2 \\
c(2k+1) = 3c(k+1) \\
\end{cases}
\]
with initial condition $c(1) = 0$. 
\end{example}

For more examples of $k$-regular sequences, see Allouche and Shallit \cite[pp.~186--194]{AS1}. We now state the main result of this paper.

\begin{theorem}\label{main}
We have
$$\sum_{m\geq0} \tau(m^2+1)x^m = \sum_{n\geq1} x^{s(n)}$$
where $\tau$ is the usual divisor counting function and $s(n)_{n\geq1}$ is a $2$-regular sequence defined recursively by
\[
\begin{cases}
s(4k) = 2s(2k) - s(k)\\
s(4k+1) = 2s(2k) + s(2k + 1) \\
s(4k+2) = 2s(2k + 1) + s(2k) \\
s(4k+3) = 2s(2k + 1) - s(k) \\
\end{cases}
\]
with initial conditions $s(1) = 0,\ s(2)=1,\ s(3) = 1$.
\end{theorem}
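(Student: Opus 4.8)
The plan is to reduce the generating-function identity to a single counting statement and then prove that statement by a bijection. Writing $\sum_{n\ge 1}x^{s(n)}=\sum_{m\ge 0}N(m)\,x^m$ with $N(m):=\#\{n\ge 1:s(n)=m\}$, the theorem is equivalent to the claim that $N(m)=\tau(m^2+1)$ for every $m\ge 0$. The $2$-regularity itself is the easy half: the four displayed relations express each decimated subsequence $s(4k+r)$ as a fixed integer combination of $s(2k),s(2k+1),s(k)$, so the $\Zee$-module generated by $s(n),s(2n),s(2n+1)$ is finitely generated and stable under the base-$2$ section operators $s(n)\mapsto s(2n)$, $s(n)\mapsto s(2n+1)$, which is exactly $2$-regularity.

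For the counting statement I would introduce the auxiliary quantities $a_n:=s(2n)-s(n)$ and $c_n:=s(2n+1)-s(n)$. A direct rewriting shows that the given four-way recursion for $s$ is equivalent to the two-way tree recursion
\[
(a_{2n},\,s(2n),\,c_{2n})=(a_n,\,s(n)+a_n,\,a_n+2s(n)+c_n),\qquad
(a_{2n+1},\,s(2n+1),\,c_{2n+1})=(a_n+2s(n)+c_n,\,s(n)+c_n,\,c_n),
\]
with root $(a_1,s(1),c_1)=(1,0,1)$; these are precisely the actions of $\left(\begin{smallmatrix}1&1\\0&1\end{smallmatrix}\right)$ and $\left(\begin{smallmatrix}1&0\\1&1\end{smallmatrix}\right)$ on the binary quadratic form $a_nx^2+2s(n)\,xy+c_ny^2$. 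A short induction along the tree then yields the three facts I need: $a_n,c_n\ge 1$; the discriminant identity $a_nc_n=s(n)^2+1$; and the strict inequalities $s(2n)>s(n)$ and $s(2n+1)>s(n)$ (so $s(n)\ge 0$ throughout). The discriminant identity is the engine of the whole argument, and it propagates cleanly because $s(2n)=s(n)+a_n$ gives $a_{2n}c_{2n}=a_n\bigl(a_n+2s(n)+c_n\bigr)=(s(n)+a_n)^2+1$, and symmetrically for the right child.

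The heart of the proof is to show that $n\mapsto(a_n,c_n)$ is a bijection from $\{n\ge 1\}$ onto $V:=\{(a,c)\in\Zee_{\ge1}^2:ac-1\text{ is a perfect square}\}$. The previous paragraph gives that the image lies in $V$ and that $s(n)=\sqrt{a_nc_n-1}$, so the fibre over a fixed value $m$ is carried to $\{(a,c):ac=m^2+1\}$, a set of size exactly $\tau(m^2+1)$ (the divisor $a$ determines $c$). Thus the bijection immediately yields $N(m)=\tau(m^2+1)$ and hence the theorem. Surjectivity and injectivity I would obtain together by a Euclidean-type descent: for $(a,c)\in V$ with $(a,c)\neq(1,1)$ one has $a\neq c$ (since $a=c$ forces $a^2-1$ to be a square, i.e.\ $a=1$), and the sign of $a-c$ records whether the node is a left or right child; undoing the corresponding move sends $(a,c)$ to its parent $(a,\,a+c-2m)$ or $(a+c-2m,\,c)$ with $m=\sqrt{ac-1}$, which again lies in $V$ and has strictly smaller value of $\sqrt{ac-1}$. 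Since this value is a nonnegative integer, the descent terminates at the root $(1,1)$, corresponding to $n=1$, proving surjectivity, while the uniqueness of the parent at each stage proves injectivity.

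The step I expect to be the main obstacle is verifying that the descent is well defined: one must check that the parent genuinely lies in $V$ (positivity and integrality of its entries, via $a\mid m^2+1$ together with $a+c-2m=\bigl((m-a)^2+1\bigr)/a$), that $\sqrt{ac-1}$ strictly decreases, and --- most delicately --- that the case split driven by $a\lessgtr c$ is consistent with the tree, i.e.\ that every left child satisfies $a<c$ and every right child $a>c$. Once these monovariant and consistency checks are in place, the bijection, and with it the generating-function identity, follows.
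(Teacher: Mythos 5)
Your proposal is correct and is essentially the paper's own proof in different coordinates: your pair $(a_n, c_n)$ is the paper's tree pair $(d, m)$ rewritten as the complementary divisor pair $(d, (m^2+1)/d)$ of $s(n)^2+1$, your two child maps are the paper's $L$ and $R$, your discriminant identity plays the role of Lemma~\ref{only_div_pairs}, and your descent with case split on the sign of $a-c$ is Lemma~\ref{unique_lemma}, whose case split (which candidate parent has nonnegative second component) is equivalent to yours since $a < c$ iff $a \leq \sqrt{ac-1}$. The only notable difference is direction of derivation --- you recover the tree from the recursion, while the paper builds the tree of pairs first and then reads off the recursion --- which is presentational rather than substantive.
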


In other words, we prove that 
$$\#\{n: s(n) = m \} = \tau(m^2 + 1)$$ 
for all integers $m \geq 0$.

\begin{proof}
Consider the binary tree of integer pairs $(d, m)$ generated in the following way. We begin with the pair $(1, 0)$. Each pair has two children, left and right, given by the maps $L(d, m) := (d, m+d)$ and $R(d, m) :=  \left( \frac{(m+d)^2+1}{d}, m + \frac{m^2+1}{d}\right)$. For the first four rows of the tree, see Figure~\ref{fig:main_tree}.
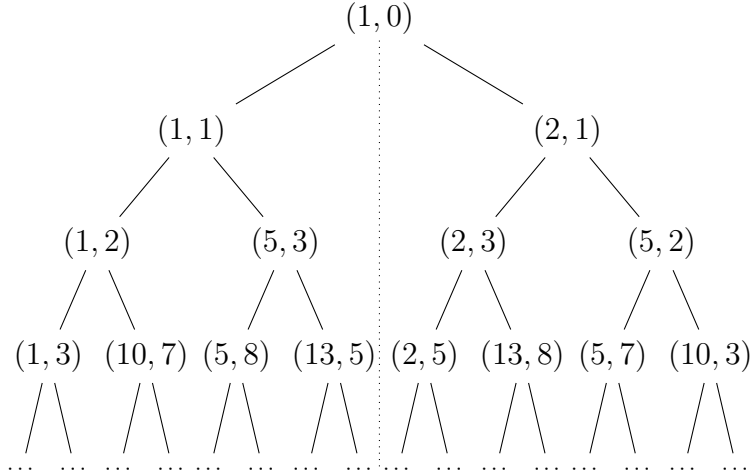
\begin{figure}[htb]
\centering
\begin{tikzpicture}[level distance=15mm, 
                    level 1/.style={sibling distance=50mm},
                    level 2/.style={sibling distance=25mm},
                    level 3/.style={sibling distance=13mm},
                    level 4/.style={sibling distance=7mm}]
  \draw[dotted] (0,-0.3) -- (0,-6);

  \node {\((1, 0)\)}
    child {
      node {\((1, 1)\)} 
      child {
        node {\((1, 2)\)}
        child {node {\((1, 3)\)}
          child {node {\scriptsize $\cdots$}}
          child {node {\scriptsize $\cdots$}}
        }
        child {node {\((10, 7)\)}
          child {node {\scriptsize $\cdots$}}
          child {node {\scriptsize $\cdots$}}
        }
      }
      child {
        node {\((5, 3)\)}
        child {node {\((5, 8)\)}
          child {node {\scriptsize $\cdots$}}
          child {node {\scriptsize $\cdots$}}
        }
        child {node {\((13, 5)\)}
          child {node {\scriptsize $\cdots$}}
          child {node {\scriptsize $\cdots$}}
        }
      }
    }
    child {
      node {\((2, 1)\)}
      child {
        node {\((2, 3)\)}
        child {node {\((2, 5)\)}
          child {node {\scriptsize $\cdots$}}
          child {node {\scriptsize $\cdots$}}
        }
        child {node {\((13, 8)\)}
          child {node {\scriptsize $\cdots$}}
          child {node {\scriptsize $\cdots$}}
        }
      }
      child {
        node {\((5, 2)\)}
        child {node {\((5, 7)\)}
          child {node {\scriptsize $\cdots$}}
          child {node {\scriptsize $\cdots$}}
        }
        child {node {\((10, 3)\)}
          child {node {\scriptsize $\cdots$}}
          child {node {\scriptsize $\cdots$}}
        }
      } 
    };
\end{tikzpicture}
\caption{Integer pair tree.}
\label{fig:main_tree}
\end{figure}

The involution map $\iota(d, m) := (\frac{m^2+1}{d}, m)$ sends each pair to its reflection with respect to the tree's central line of symmetry, represented by the dotted line in Figure~\ref{fig:main_tree}. We note that $R(d, m) = (\iota \circ L \circ \iota) (d, m)$, which can either be checked by direct computation or by fixing an integer pair $(d, m)$ on the tree and visually seeing that $\iota \circ L \circ \iota$ (reflection, left-child map, reflection) sends $(d, m)$ to the same pair as the right-child map $R(d, m)$.

\begin{lemma}\label{only_div_pairs}
If a pair $(d, m)$ appears on the integer pair tree then $d \geq 1$, $m \geq 0$, and $d$ divides $m^2+1$. 
\end{lemma}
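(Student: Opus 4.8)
The plan is to argue by structural induction on the tree, tracking the conjunction of three properties, which I will abbreviate $P(d,m)$: that $d \geq 1$, that $m \geq 0$, and that $d \mid m^2+1$ (the last of which also guarantees that the relevant coordinate expressions are integers). The base case is the root $(1,0)$, for which $d = 1 \geq 1$, $m = 0 \geq 0$, and $1 \mid 1$. For the inductive step it suffices to show that each child map preserves $P$; rather than treat $L$ and $R$ separately, I would exploit the identity $R = \iota \circ L \circ \iota$ noted above, reducing everything to the behaviour of $L$ and $\iota$.

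First I would check that $L$ preserves $P$. Writing $L(d,m) = (d, m+d)$, the first two conditions are immediate, since $d$ is unchanged and $m + d \geq 1$. For divisibility, expand $(m+d)^2 + 1 = (m^2+1) + d(2m+d)$; both summands are divisible by $d$ by the inductive hypothesis, so $d \mid (m+d)^2+1$. (This computation simultaneously shows that the first coordinate produced by $R$ is a genuine integer.) Next I would check that $\iota$ preserves $P$. For $\iota(d,m) = (\tfrac{m^2+1}{d}, m)$, the hypothesis $d \mid m^2+1$ makes the first coordinate a positive integer, hence $\geq 1$; the second coordinate $m \geq 0$ is unchanged; and since $\tfrac{m^2+1}{d}$ divides $m^2+1$, which is the square-plus-one of the unchanged second coordinate $m$, the divisibility condition holds as well.

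Since $\iota$ is an involution and both $\iota$ and $L$ preserve $P$, the composition $R = \iota \circ L \circ \iota$ sends any $P$-pair to a $P$-pair; in particular the coordinates of $R(d,m)$ are integers and satisfy $d \geq 1$, $m \geq 0$, and $d \mid m^2+1$. This completes the induction, and hence every pair on the tree satisfies the claim.

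The step I expect to carry the real content is the divisibility condition for the right child, namely that $\tfrac{(m+d)^2+1}{d} \mid (m')^2 + 1$ where $m' = m + \tfrac{m^2+1}{d}$. The involution identity dispatches this for free, which is the main payoff of the relation $R = \iota \circ L \circ \iota$. If one instead prefers a direct verification, the key computation is that, setting $q = \tfrac{m^2+1}{d}$, the first coordinate of $R(d,m)$ simplifies to $d + 2m + q$ and one finds $(m')^2 + 1 = q\,(d + 2m + q)$, exhibiting the divisibility explicitly. Everything else in the argument is bookkeeping of signs and integrality.
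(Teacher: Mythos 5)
Your proof is correct and follows essentially the same route as the paper: structural induction from the root $(1,0)$, with the divisibility for the right child handled via the identity $R = \iota \circ L \circ \iota$ by checking that $L$ and $\iota$ each preserve the three properties. Your added explicit computation $(m')^2+1 = q\,(d+2m+q)$ is a nice self-contained alternative, but it is not needed and does not change the structure of the argument.
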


\begin{proof}
Suppose an integer pair $(d, m)$ appears on the tree with the properties that $d \geq 1$, $m \geq 0$, and $d \mid (m^2+1)$. We claim that these properties also hold for the transformed pairs $L(d, m)$ and $R(d, m)$. It is easy to see that both transformed pairs $L(d,m)$ and $R(d, m)$ are still integer pairs, that their first components are $\geq 1$, and that their second components are $\geq 0$. To see that the first component still divides $1+$ the square of the second component, it suffices to check that this property is preserved by $L$ and $\iota$, since we saw that $R = \iota \circ L \circ \iota$. Indeed,
the property $d \mid (m^2+1)$ is preserved by both
$L$ and $\iota$, since 
$$d \mid (m^2 + 1) \implies d \mid \left((m+d)^2 + 1\right)
\text{ and } \frac{m^2+1}{d} \mid (m^2 + 1).$$
The first integer pair on the tree is $(1,0)$, which satisfies all three properties. Therefore, all of its descendants must also satisfy all three properties.
\end{proof}

\begin{lemma}\label{unique_lemma}
Suppose $d \geq 1$, $m \geq 0$, and $d$ divides $m^2+1$. Then the pair $(d, m)$ appears on the integer pair tree exactly once.
\end{lemma}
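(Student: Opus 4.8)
The plan is to reverse the tree: I will show that every pair \((d,m)\) satisfying the hypotheses (call such pairs \emph{admissible}) can be walked back to the root \((1,0)\) in a unique way, which simultaneously yields existence and uniqueness. The natural bookkeeping quantity is the \emph{conjugate} \(e := (m^2+1)/d\), so that \(de = m^2+1\) and \(\iota(d,m) = (e,m)\). A direct computation shows that passing to a left child fixes \(d\) and replaces \(e\) by \(e + 2m + d\), while passing to a right child (using \(R = \iota\circ L\circ\iota\)) fixes \(e\) and replaces \(d\) by \(d + 2m + e\); in both cases \(m\) is replaced by \(m+d\) or \(m+e\) respectively. Since \(d,e \geq 1\), the coordinate \(m\) strictly increases along every downward edge, so I will argue by strong induction on \(m\).

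The crux is a dichotomy: for any admissible pair with \(m \geq 1\), exactly one of \(d \leq m\) and \(e \leq m\) holds. This follows from \(de = m^2+1\): if both were \(\leq m\) then \(de \leq m^2 < m^2+1\), while if both were \(\geq m+1\) then \(de \geq (m+1)^2 > m^2+1\). (The only admissible pair with \(m=0\) is the root, since then \(de=1\).) I will use this to define a deterministic parent map: if \(d \leq m\), declare \((d,m)\) a left child with parent \(L^{-1}(d,m) = (d,\,m-d)\); if \(e \leq m\), declare it a right child with parent \((\iota\circ L^{-1}\circ\iota)(d,m) = \big(((m-e)^2+1)/e,\ m-e\big)\). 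A short congruence check — \((m-d)^2+1 \equiv m^2+1 \pmod d\) and \((m-e)^2+1 \equiv m^2+1 \pmod e\) — confirms that the parent is again admissible and that its second coordinate (\(m-d\) or \(m-e\)) is nonnegative and strictly smaller than \(m\).

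With these pieces, both halves follow by induction on \(m\). For existence, iterating the parent map decreases \(m\) at each step and must terminate at the unique admissible pair with \(m=0\), namely \((1,0)\); reading the recorded left/right choices backwards and applying \(L\) and \(R\) from the root reproduces \((d,m)\), so it appears on the tree. For uniqueness, I observe that the child maps \(L,R\) strictly increase \(m\), so the root is the only occurrence with \(m=0\); and for \(m \geq 1\) any occurrence of \((d,m)\) is a child whose direction and parent are forced by the dichotomy above. Since the parent is admissible with smaller \(m\), it occurs exactly once by the inductive hypothesis, and each of its occurrences produces exactly one occurrence of \((d,m)\) in the forced direction. I expect the main obstacle to be purely bookkeeping: verifying that the parent map is a genuine two-sided inverse of the appropriate child map on admissible pairs and that it never leaves the admissible set, after which the dichotomy makes the induction essentially automatic.
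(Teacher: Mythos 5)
Your proposal is correct and follows essentially the same route as the paper: strong induction on $m$, with the key dichotomy that exactly one of the two candidate parents $L^{-1}(d,m)$, $R^{-1}(d,m)$ has a nonnegative second component, forcing a unique parent that is admissible with strictly smaller $m$. The only cosmetic difference is that you prove the dichotomy multiplicatively from $de = m^2+1$ (bounding $de$ against $m^2$ and $(m+1)^2$), whereas the paper states it as $\inf\{d,e\} \leq m < \sup\{d,e\}$ and checks the cases $d \leq m$ and $d > m$; these are the same fact.
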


\begin{proof}
We begin by restating the lemma so that it can be proved using induction. We let $P(M)$ denote the statement ``If $d \geq 1$ divides $m^2 + 1$ with $0 \leq m \leq M$, then the pair $(d, m)$ appears on the tree exactly once."

To prove the lemma, we show that $P(M)$ is true for all integers $M \geq 0$ by induction on $M$. As our base case, we see that $P(0)$ is true, since the pair $(1,0)$ appears exactly once on the tree, in the first row. This is because $L$ and $R$ each increase the second component of a pair by at least $1$, so there are no more pairs on the tree with second component $0$. 

Now suppose $M > 0$. Our induction assumption is that $P(M-1)$ is true. Namely, we assume that for all $d \geq 1$ with $d \mid (m^2 + 1)$ and $0 \leq m \leq M-1$ we have $(d, m)$ appearing on the tree exactly once. We show that this implies $P(M)$ is true by assuming that some $d \geq 1$ divides $M^2 + 1$ and using the induction assumption to prove that $(d, M)$ must appear on the tree exactly once. The claim that $(d, M)$ appears on the tree exactly once is equivalent to the statement that there exists a unique path from the root pair $(1, 0)$ to $(d, M)$ in terms of the maps $L$ and $R$. It is easy to check that $L^{-1}(d, M) = (d, M-d)$ and $R^{-1}(d, M) =  \left( \frac{(M-d)^2+1}{d}, M - \frac{M^2+1}{d}\right)$. We show that exactly one of the second components of these inverse mappings $\{M-d, \ M -  \frac{M^2+1}{d}\}$ is nonnegative (which we showed in the previous lemma is a necessary condition for pairs to appear on the tree). The nonnegativity of exactly one of $\{M-d, \ M -  \frac{M^2+1}{d}\}$ is a result of the inequalities
\[\inf \left\{d, \frac{m^2+1}{d} \right\} \leq m < \sup\left\{d, \frac{m^2+1}{d}\right\},\]
which hold for all positive integers $d$ and $m$ with $d \mid (m^2+1)$. These inequalities can be proved by considering the cases $d \leq m$ and $d > m$ and using the fact that $m^2 < m^2 + 1 < (m+1)^2$ for all $m > 0$. Equality occurs in the left inequality when $(d,m) \in \{(1,1), (2,1) \}$. Thus, exactly one of the pairs $\{L^{-1}(d, M), R^{-1}(d, M) \}$ has a nonnegative second component. Furthermore, this component is strictly less than $M$. The properties $d \mid (m^2 + 1)$ and $d \geq 1$ are clearly preserved by $L^{-1}$ and $R^{-1}$. Therefore, by our induction assumption and Lemma~\ref{only_div_pairs}, exactly one of the pairs $\{L^{-1}(d, M), R^{-1}(d, M) \}$ appears on the tree and there exists a unique path from $(1, 0)$ to this pair in terms of $L$ and $R$. In other words, the pair $(d, M)$ has exactly one parent appearing on the tree, which is guaranteed by our induction assumption to have a unique path back to $(1, 0)$ in terms of $L$ and $R$. 

Finally, this proves that $(d, M)$ has a unique path to $(1,0)$ in terms of $L$ and $R$ and therefore appears on the tree exactly once, which shows that $P(M)$ is true.
\end{proof}
We now write only the second pair components as they appear on the integer pair tree. 
\begin{figure}[htb]
\centering
\begin{tikzpicture}[level distance=1.5cm,
  level 1/.style={sibling distance=4cm},
  level 2/.style={sibling distance=2cm},
  level 3/.style={sibling distance=1cm},
  level 4/.style={sibling distance=0.5cm}]
  \node {$0$}
    child {node {$1$}
      child {node {$2$}
        child {node {$3$}
          child {node {\scriptsize $\cdots$}}
          child {node {\scriptsize $\cdots$}}
        }
        child {node {$7$}
          child {node {\scriptsize $\cdots$}}
          child {node {\scriptsize $\cdots$}}
        }
      }
      child {node {$3$}
        child {node {$8$}
          child {node {\scriptsize $\cdots$}}
          child {node {\scriptsize $\cdots$}}
        }
        child {node {$5$}
          child {node {\scriptsize $\cdots$}}
          child {node {\scriptsize $\cdots$}}
        }
      }
    }
    child {node {$1$}
      child {node {$3$}
        child {node {$5$}
          child {node {\scriptsize $\cdots$}}
          child {node {\scriptsize $\cdots$}}
        }
        child {node {$8$}
          child {node {\scriptsize $\cdots$}}
          child {node {\scriptsize $\cdots$}}
        }
      }
      child {node {$2$}
        child {node {$7$}
          child {node {\scriptsize $\cdots$}}
          child {node {\scriptsize $\cdots$}}
        }
        child {node {$3$}
          child {node {\scriptsize $\cdots$}}
          child {node {\scriptsize $\cdots$}}
        }
      }
    };
\end{tikzpicture}
\caption{Second component tree.}
\label{fig:second_comp}
\end{figure}
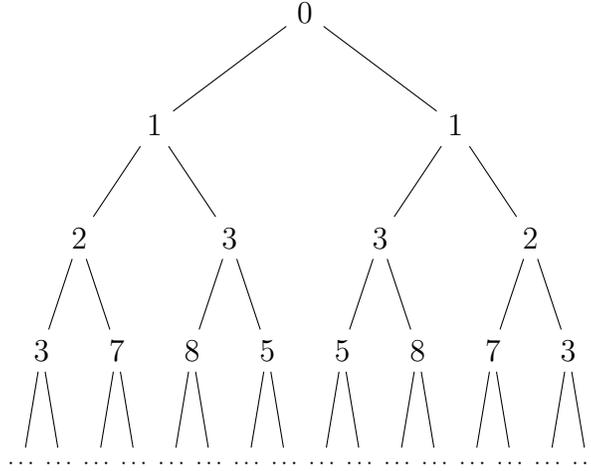 
Let us temporarily define $s(n)$ as the sequence one gets by reading the integers on the second component tree left-to-right or right-to-left, starting from the top. Figure~\ref{fig:second_comp} shows the first four rows of the second component tree. For example, $s(1) = 0,\ s(2) = 1,\ s(3) = 1$, etc. We will show that this agrees with the original definition we gave in Theorem~\ref{main}.  Using the new definition, and in light of Lemmas~\ref{only_div_pairs} and~\ref{unique_lemma}, this proves that the number of occurrences of an integer $m \geq 0$ on the second component tree is equal to $\#\{(d,m): d \geq 1,\ d \mid (m^2 + 1)\} = \tau(m^2+1)$.
To see that $s(n)$ satisfies the recursions we gave in Theorem~\ref{main}, we keep track of the second components as they are changed by the maps $L$ and $R$. For example, since $L(d, m) = (d, m+d)$ and $R(d, m) = \left( \frac{(m+d)^2+1}{d}, m + \frac{m^2+1}{d}\right)$, we write $m_L$ for $m+d$ and $m_R$ for $m + \frac{m^2+1}{d}$. Figure~\ref{fig:m_comps} shows three generations of second pair components. Figure~\ref{fig:m_relations} shows how to write components in the third generation as linear combinations of components from the previous two generations.

We now rewrite the parent-child relationships in terms of the sequence $s(n)$. Reading a two-child binary tree left-to-right, we see that for a parent with index $k$, its left child's index is $2k$ while its right child's index is $2k+1$. Figure~\ref{fig:tree-sk} relates three generations of components to the corresponding indices of $s(n)$.

Using the linear dependencies we found, we finally recover the recursions from Theorem~\ref{main}, namely
\[
\begin{cases}
s(4k) = 2s(2k) - s(k) \\
s(4k+1) = 2s(2k) + s(2k + 1)\\
s(4k+2) = 2s(2k + 1) + s(2k)  \\
s(4k+3) = 2s(2k + 1) - s(k). \\
\end{cases}
\]
\end{proof}

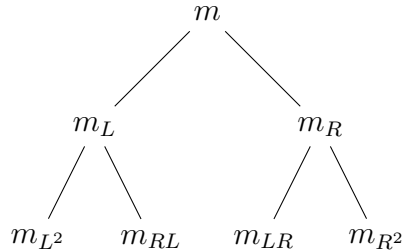
\begin{figure}[htb]
\centering
\begin{tikzpicture}[level distance=1.5cm,
  level 1/.style={sibling distance=3cm},
  level 2/.style={sibling distance=1.5cm}]
  \node {$m$}
    child {node {$m_L$}
      child {node {$m_{L^2}$}}
      child {node {$m_{RL}$}}
    }
    child {node {$m_R$}
      child {node {$m_{LR}$}}
      child {node {$m_{R^2}$}}
    };
\end{tikzpicture}
\caption{Three generations of second pair components.}
\label{fig:m_comps}
\end{figure}
\begin{figure}[htb]
\centering
\begin{tikzpicture}[level distance=2cm,
  level 1/.style={sibling distance=7.5cm},
  level 2/.style={sibling distance=3.5cm}]
  \node {$m$}
    child {node {$m_L$}
      child {node {$m_{L^2}=2m_L -  m$}}
      child {node {$m_{RL}=2m_L + m_R$}}
    }
    child {node {$m_R$}
      child {node {$m_{LR}=2m_R + m_L$}}
      child {node {$m_{R^2}=2m_R - m$}}
    };
\end{tikzpicture}
\caption{Linear dependencies between second pair components.}
\label{fig:m_relations}
\end{figure}
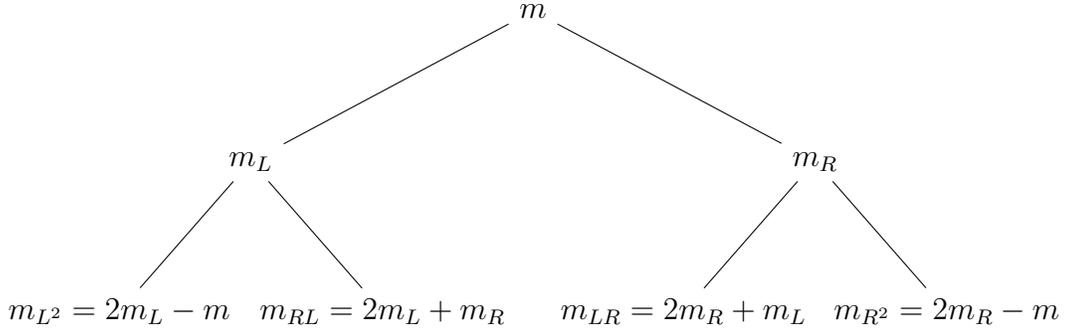

\begin{figure}[htb]
\centering
\begin{tikzpicture}[level distance=2cm,
  level 1/.style={sibling distance=7.5cm},
  level 2/.style={sibling distance=3.5cm}]
  \node {$m=s(k)$}
    child {node {$m_L=s(2k)$}
      child {node {$m_{L^2}=s(4k)$}}
      child {node {$m_{RL} =s(4k+1)$}}
    }
    child {node {$m_R=s(2k+1)$}
      child {node {$m_{LR} = s(4k+2)$}}
      child {node {$m_{R^2} = s(4k+3)$}}
    };
\end{tikzpicture}
\caption{Dependencies re-indexed in terms of $s(n)$.}
\label{fig:tree-sk}
\end{figure}
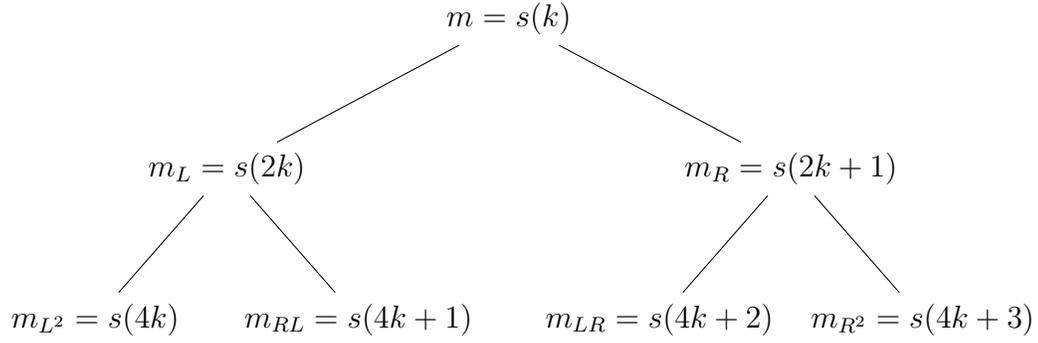

\section{Other properties of the sequence}
We discuss some other interesting properties of the sequence $s(n)$. From the recursions for $s(n)$, it is easy to show that the row sums of the second component tree satisfy the linear recurrence $r_n = 5r_{n-1} - 2r_{n-2}$ with initial conditions $r_0 = 0,\ r_1 = 2$. Here, $r_n$ denotes the sum of integers on row $n\geq0$ of the second component tree, or, equivalently, $r_n := \sum_{2^n \leq t < 2^{n+1}}s(t)$. By diagonalizing the $2 \times 2$ integer matrix corresponding to this recurrence, we write down an exact formula for the average value of an integer on row $n \geq 0$ of the tree.
\begin{proposition}
\[
\frac{1}{2^n} \sum_{2^n \leq t < 2^{n+1}} s(t) 
= \frac{(5 + \sqrt{17})^n - (5 - \sqrt{17})^n}{2^{2n - 1} \sqrt{17}}
\]
\end{proposition}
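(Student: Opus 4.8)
The plan is to regard this as a routine second-order linear recurrence problem, since the combinatorial content is already encoded in Theorem~\ref{main}. Setting $r_n := \sum_{2^n \le t < 2^{n+1}} s(t)$, the left-hand side of the proposition is precisely $r_n/2^n$, so it suffices to produce a closed form for $r_n$ and then divide by $2^n$. First I would confirm the recurrence $r_n = 5r_{n-1} - 2r_{n-2}$ stated in the text. To do this, I would partition the $2^{n+2}$ indices of row $n+2$ into blocks $\{4k,4k+1,4k+2,4k+3\}$ with $2^n \le k < 2^{n+1}$ and sum the four recursions of Theorem~\ref{main}; the coefficients collapse to give $s(4k)+s(4k+1)+s(4k+2)+s(4k+3) = 5\,s(2k) + 5\,s(2k+1) - 2\,s(k)$. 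Summing over $k$ and observing that $\{2k,2k+1\}$ ranges over all of row $n+1$ while $k$ ranges over row $n$ yields $r_{n+2} = 5r_{n+1} - 2r_n$, with the base values $r_0 = s(1) = 0$ and $r_1 = s(2)+s(3) = 2$ read off directly.

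Next I would solve this recurrence by the diagonalization flagged in the statement. The companion matrix $\left(\begin{smallmatrix} 5 & -2 \\ 1 & 0 \end{smallmatrix}\right)$ has characteristic polynomial $x^2 - 5x + 2$, whose roots $\lambda_{\pm} = \tfrac{1}{2}(5 \pm \sqrt{17})$ are real and distinct; hence the matrix is diagonalizable and the recurrence has general solution $r_n = A\lambda_+^{\,n} + B\lambda_-^{\,n}$. Imposing $r_0 = 0$ forces $B = -A$, after which $r_1 = 2$ together with $\lambda_+ - \lambda_- = \sqrt{17}$ gives $A\sqrt{17} = 2$, so $A = 2/\sqrt{17}$ and \[ r_n = \frac{2}{\sqrt{17}}\left(\lambda_+^{\,n} - \lambda_-^{\,n}\right). \]

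To finish, I would pass from $r_n$ to the average by writing $\lambda_\pm^{\,n} = (5\pm\sqrt{17})^n/2^n$ and dividing by $2^n$, so that \[ \frac{r_n}{2^n} = \frac{2\big[(5+\sqrt{17})^n - (5-\sqrt{17})^n\big]}{2^{2n}\sqrt{17}} = \frac{(5+\sqrt{17})^n - (5-\sqrt{17})^n}{2^{2n-1}\sqrt{17}}, \] which is exactly the claimed identity.

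I do not anticipate a genuine obstacle here, as every step is standard once the recurrence is in hand. The two places that demand care are the coefficient bookkeeping in the derivation of $r_n = 5r_{n-1} - 2r_{n-2}$ (a single sign slip among the four recursions would corrupt the eigenvalues) and the power-of-two accounting in the last display, where the two factors of $2^n$ and the leading $2$ must be combined correctly into $2^{2n-1}$. As a safeguard I would check the closed form against the first few rows: it returns the averages $0$, $1$, and $\tfrac{5}{2}$ for $n = 0, 1, 2$, matching $r_0 = 0$, $r_1 = 2$, and $r_2 = 10$.
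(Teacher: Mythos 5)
Your proposal is correct and follows essentially the same route as the paper, which likewise derives the row-sum recurrence $r_n = 5r_{n-1} - 2r_{n-2}$ (with $r_0 = 0$, $r_1 = 2$) from the four recursions of Theorem~\ref{main} and then diagonalizes the associated $2 \times 2$ companion matrix before dividing by $2^n$. Your write-up actually supplies more detail than the paper's one-line sketch, and the coefficient bookkeeping, eigenvalues $\tfrac{1}{2}(5 \pm \sqrt{17})$, and power-of-two accounting all check out.
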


\begin{proposition}
The integer $n^2 + 1$ is a prime number if and only if 
$$\{m: s(m) = n\} = \{2^n, 2^{n+1}-1\}.$$
\end{proposition}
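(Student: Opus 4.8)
The plan is to translate the statement about the index set $\{m : s(m) = n\}$ into the language of the integer pair tree, where occurrences of the value $n$ correspond bijectively to divisors of $n^2+1$. By Lemmas~\ref{only_div_pairs} and~\ref{unique_lemma}, the pairs on the tree whose second component equals $n$ are exactly the pairs $(d, n)$ with $d \geq 1$ and $d \mid (n^2+1)$, each occurring exactly once, and the index $m$ with $s(m) = n$ attached to $(d,n)$ is just the breadth-first position of that pair. Hence $\#\{m : s(m) = n\} = \tau(n^2+1)$ (which is the main theorem), and I need only locate the two extreme divisors $d = 1$ and $d = n^2+1$.

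First I would establish the locating fact: for $n \geq 1$ we have $s(2^n) = n$ and $s(2^{n+1}-1) = n$, with these two indices distinct. The index $2^n$ is the leftmost node of row $n$, reached from the root $(1,0)$ by $n$ applications of $L$; since $L(1,k) = (1,k+1)$, a one-line induction gives $L^n(1,0) = (1,n)$, so $s(2^n) = n$ and this occurrence comes from the divisor $d = 1$. The index $2^{n+1}-1$ is the rightmost node of row $n$. Rather than iterate $R$, I would exploit the involution $\iota$: the root is fixed by $\iota$, and from $R = \iota \circ L \circ \iota$ one gets $\iota L = R \iota$, hence $\iota L^n(1,0) = R^n\iota(1,0) = R^n(1,0)$. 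Therefore the rightmost node is $\iota(1,n) = (n^2+1,\,n)$, giving $s(2^{n+1}-1) = n$ from the divisor $d = n^2+1$. Finally $2^n < 2^{n+1}-1$ for all $n \geq 1$.

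With this in hand both implications are short. For the forward direction, if $n^2+1$ is prime then its divisors are just $1$ and $n^2+1$, so $\tau(n^2+1) = 2$ and $n$ occurs exactly twice in $s$; since $2^n$ and $2^{n+1}-1$ are two distinct indices already known to realize the value $n$, they must account for both occurrences, so $\{m : s(m) = n\} = \{2^n, 2^{n+1}-1\}$. For the converse, if $\{m : s(m) = n\} = \{2^n, 2^{n+1}-1\}$ then (again using $n \geq 1$) this set has exactly two elements, so $\tau(n^2+1) = 2$ by the main theorem, forcing $n^2+1$ to be prime.

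I expect the only genuine obstacle to be the right-spine identity $R^n(1,0) = (n^2+1, n)$, i.e.\ verifying cleanly that reflection by $\iota$ swaps the all-left and all-right paths while fixing the root; everything else is bookkeeping on top of the main theorem. I would also flag the boundary case $n = 0$, where $2^n$ and $2^{n+1}-1$ coincide while $n^2+1 = 1$ is not prime, so the statement is to be read for $n \geq 1$.
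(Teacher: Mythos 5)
Your proof is correct and follows essentially the same route as the paper: show that $2^n$ and $2^{n+1}-1$ are the leftmost and rightmost indices of row $n$, both realizing the value $n$, and then use Theorem~\ref{main} to count occurrences by $\tau(n^2+1)$ in both directions. Your explicit derivation of the right-spine identity $R^n(1,0) = (n^2+1,\,n)$ via $\iota L = R \iota$, and your flagging of the boundary case $n=0$ (where the biconditional as stated actually fails, since $\{m : s(m)=0\} = \{1\} = \{2^0, 2^1-1\}$ while $1$ is not prime), supply details the paper leaves implicit.
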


\begin{proof}
Note that $s(2^n) = s(2^{n+1} - 1) = n$ for all $n \geq 0$. These correspond to the leftmost and rightmost integers on row $n$ of the second component tree. In other words, $\{2^n, 2^{n+1}-1\} \subseteq \{m: s(m) = n \}$.  Now, if $n^2 + 1$ is a prime number, then $\tau(n^2 + 1) = 2$ and it follows from Theorem~\ref{main} that $\{m: s(m) = n\} = \{2^n, 2^{n+1}-1\}$.
Conversely, if $n^2 + 1$ is composite, then $\tau(n^2 + 1) > 2$ and so Theorem~\ref{main} implies there is some $m \notin \{2^n, 2^{n+1} - 1\}$ such that $s(m) = n$.
\end{proof}
\begin{figure}[htb]
  \centering
  \includegraphics[width=0.8\textwidth]{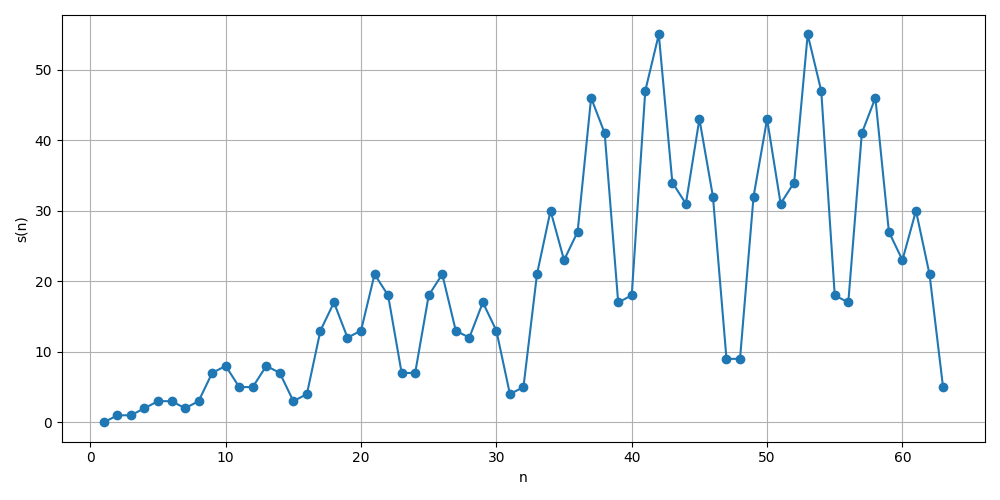}
  \caption{Line plot of the sequence \( s(n) \) for $n \in [1, 63]$}
  \label{fig:sequence_plot}
\end{figure}

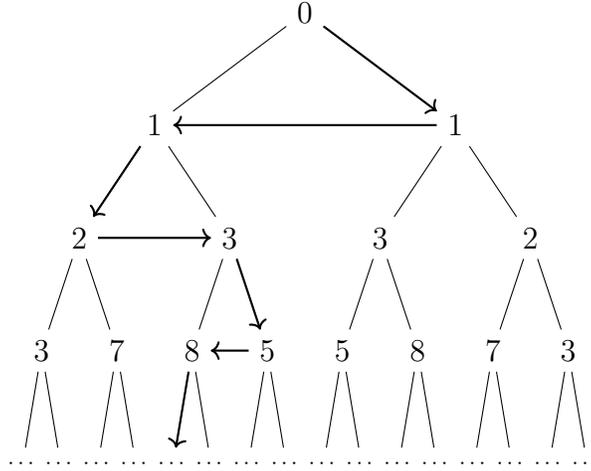
\begin{figure}[htb]\label{fib_path}
\centering
\begin{tikzpicture}[
  level distance=1.5cm,
  level 1/.style={sibling distance=4cm},
  level 2/.style={sibling distance=2cm},
  level 3/.style={sibling distance=1cm},
  level 4/.style={sibling distance=0.5cm},
  every node/.style={draw=none}
  ]
  \node (A) {$0$}
    child {node (B) {$1$}
      child {node (C) {$2$}
        child {node (D) {$3$}
          child {node (D1) {\scriptsize $\cdots$}}
          child {node (D2) {\scriptsize $\cdots$}}
        }
        child {node (E) {$7$}
          child {node {\scriptsize $\cdots$}}
          child {node {\scriptsize $\cdots$}}
        }
      }
      child {node (F) {$3$}
        child {node (G) {$8$}
          child {node (G1) {\scriptsize $\cdots$}}
          child {node {\scriptsize $\cdots$}}
        }
        child {node (H) {$5$}
          child {node {\scriptsize $\cdots$}}
          child {node {\scriptsize $\cdots$}}
        }
      }
    }
    child {node (I) {$1$}
      child {node (J) {$3$}
        child {node (J1) {$5$}
          child {node {\scriptsize $\cdots$}}
          child {node {\scriptsize $\cdots$}}
        }
        child {node (J2) {$8$}
          child {node {\scriptsize $\cdots$}}
          child {node {\scriptsize $\cdots$}}
        }
      }
      child {node (K) {$2$}
        child {node {$7$}
          child {node {\scriptsize $\cdots$}}
          child {node {\scriptsize $\cdots$}}
        }
        child {node {$3$}
          child {node {\scriptsize $\cdots$}}
          child {node {\scriptsize $\cdots$}}
        }
      }
    };

  \draw[->, thick] (A) -- (I);
  \draw[->, thick] (B) -- (C);
  \draw[->, thick] (C) -- (F);
  \draw[->, thick] (F) -- (H);
  \draw[->, thick] (I) -- (B);
  \draw[->, thick] (H) -- (G);
  \draw[->, thick] (G) -- (G1);

\end{tikzpicture}
\caption{Path inside the second component tree that runs over the Fibonacci sequence.}
\label{fig:Fibonacci_path}
\end{figure}

\begin{figure}[htb]
\centering
\begin{tikzpicture}[level distance=1.5cm,
  level 1/.style={sibling distance=6cm},
  level 2/.style={sibling distance=3cm},
  level 3/.style={sibling distance=1.5cm},
  every node/.style={draw=none}]
  
  \node (Fn) {$F_{n}$}
    child {node (Fn1) {$F_{n+1}$}
      child {node {\scriptsize $\cdots$}}
      child {node {\scriptsize $\cdots$}}
    }
    child {node (Fn2) {$F_{n+2}$}
      child {node (Fn4) {$F_{n+4}$}}
      child {node (Fn3) {$F_{n+3}$}}
    };

  \draw[->, thick] (Fn) -- (Fn1);
  \draw[->, thick] (Fn1) -- (Fn2);
  \draw[->, thick] (Fn2) -- (Fn3);
  \draw[->, thick] (Fn3) -- (Fn4);
\end{tikzpicture}
\caption{Fibonacci path in the case where $m_L = F_{n+1}$ and $m_R = F_{n+2}$.}
\label{fig:path1}
\end{figure}

\begin{figure}[htb]
\centering
\begin{tikzpicture}[level distance=1.5cm,
  level 1/.style={sibling distance=6cm},
  level 2/.style={sibling distance=3cm},
  level 3/.style={sibling distance=1.5cm},
  every node/.style={draw=none}]
  
  \node (Fn) {$F_{n}$}
    child {node (Fn2) {$F_{n+2}$}
      child {node (Fn3) {$F_{n+3}$}}
      child {node (Fn4) {$F_{n+4}$}}
    }
    child {node (Fn1) {$F_{n+1}$}
      child {node {\scriptsize $\cdots$}}
      child {node {\scriptsize $\cdots$}}
    };

  \draw[->, thick] (Fn) -- (Fn1);
  \draw[->, thick] (Fn1) -- (Fn2);
  \draw[->, thick] (Fn2) -- (Fn3);
  \draw[->, thick] (Fn3) -- (Fn4);
\end{tikzpicture}
\caption{Fibonacci path in the case where $m_L = F_{n+2}$ and $m_R = F_{n+1}$.}
\label{fig:path2}
\end{figure}

The Fibonacci sequence also makes an appearance in the second component tree. The Fibonacci sequence is defined by the recursion $F_{n+1} = F_n + F_{n-1}$ with initial conditions $F_1 = 0, F_2 = 1$.
\begin{proposition}\label{fibonacci_prop}
Consider the sequence defined by
\[
a(n) = 
\begin{cases}
1, & \text{if } n = 1; \\
2a(n-1), & \text{if } n = 4k; \\
a(n-1) + 1, & \text{if } n = 4k+1; \\
2a(n-1) + 1, & \text{if } n = 4k+2; \\
a(n-1) - 1, & \text{if } n = 4k+3.
\end{cases}
\]
Then $s(a(n)) = F_n$, namely the $n$th term of the Fibonacci sequence.
\end{proposition}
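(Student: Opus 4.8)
The plan is to reinterpret the recursion for $a(n)$ as a description of a walk through the second component tree, and then to show that this walk traces out the Fibonacci numbers by repeatedly applying the $s$-recursions together with the Fibonacci recurrence. First I would translate each case of the definition of $a(n)$ into a tree move, using the fact that in the left-to-right indexing the left and right children of the node with index $k$ are $2k$ and $2k+1$. The four cases become: $a(4k) = 2a(4k-1)$ is a step to the left child; $a(4k+1) = a(4k)+1$ is a step from a left child to its right sibling; $a(4k+2) = 2a(4k+1)+1$ is a step to the right child; and $a(4k+3) = a(4k+2)-1$ is a step from a right child to its left sibling. I would justify the two sibling moves by a parity check: $a(4k) = 2a(4k-1)$ is even, so it really is a left child whose right sibling is obtained by adding $1$, while $a(4k+2) = 2a(4k+1)+1$ is odd, so it really is a right child whose left sibling is obtained by subtracting $1$.

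Next I would isolate the two local configurations drawn in Figures~\ref{fig:path1} and~\ref{fig:path2} as the engine of the argument. Suppose a node at index $j$ has $s(j) = F_n$ and its children satisfy $s(2j) = F_{n+1}$ and $s(2j+1) = F_{n+2}$ (the left-leaning case of Figure~\ref{fig:path1}). Applying the recursions for the children of the \emph{right} child, namely $s(4j+2) = 2s(2j+1) + s(2j)$ and $s(4j+3) = 2s(2j+1) - s(j)$, and substituting the Fibonacci values, I obtain $s(4j+2) = 2F_{n+2} + F_{n+1} = F_{n+4}$ and $s(4j+3) = 2F_{n+2} - F_n = F_{n+3}$, where both simplifications use only $F_{n+2} = F_{n+1} + F_n$ and $F_{n+3} = F_{n+2} + F_{n+1}$. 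The mirror case of Figure~\ref{fig:path2}, in which $s(2j) = F_{n+2}$ and $s(2j+1) = F_{n+1}$, is handled identically using $s(4j) = 2s(2j) - s(j)$ and $s(4j+1) = 2s(2j) + s(2j+1)$ for the children of the \emph{left} child, giving $s(4j) = F_{n+3}$ and $s(4j+1) = F_{n+4}$. The key observation is that each configuration reproduces itself with indices shifted by two: the child carrying the value $F_{n+2}$, together with its two children $F_{n+3}$ and $F_{n+4}$, is a fresh instance of the opposite configuration, so the two figures alternate as one descends the tree.

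With these local facts in hand I would run an induction that walks the path block by block. The base case records that the root (index $1$) carries $s(1) = 0 = F_1$ with children $s(2) = 1 = F_3$ and $s(3) = 1 = F_2$, which is exactly the right-leaning configuration of Figure~\ref{fig:path2} with $n = 1$; reading off the path $F_1 \to F_2 \to F_3 \to F_4 \to F_5$ already gives $s(a(n)) = F_n$ for $n \le 5$. The inductive step assumes that the current top node carries $F_{2j-1}$ in one of the two configurations and applies the corresponding local computation to conclude that the two new Fibonacci values $F_{2j+2}$ and $F_{2j+3}$ appear at the predicted grandchild indices and that the opposite configuration now holds one level down. Matching the sequence of tree moves produced by the $a(n)$-recursion against the order in which the path visits $F_n, F_{n+1}, \ldots$ then yields $s(a(n)) = F_n$ for every $n \geq 1$.

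The main obstacle I anticipate is organizational rather than computational: the blocks in Figures~\ref{fig:path1} and~\ref{fig:path2} overlap (each shares three nodes with its predecessor), so the induction must be set up so that each step is credited with exactly the two genuinely new Fibonacci nodes and so that the alternation between the left-leaning and right-leaning configurations is tracked without drift. Keeping the index bookkeeping consistent with the parity checks from the first step—so that every sibling move in the path is provably legal—is the part demanding the most care; the algebraic identities themselves collapse immediately to the Fibonacci recurrence.
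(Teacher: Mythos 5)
Your proposal is correct and takes essentially the same approach as the paper: you interpret the recursion for $a(n)$ as the cycling walk on the second component tree (right child, left sibling, left child, right sibling), and you verify via the recursions for $s$ and the Fibonacci recurrence that the two local configurations of Figures~\ref{fig:path1} and~\ref{fig:path2} reproduce each other as one descends, exactly as in the paper's proof. The only difference is one of rigor, not of method: your parity checks justifying the sibling moves and your explicit block-by-block induction (with the alternation of the two configurations tracked carefully) fill in details that the paper compresses into its final sentence.
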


\begin{proof}
Note that the recursions for $a(n)$ are chosen so that the sequence $s(a(n))$ sweeps out the path on the second component tree shown in Figure~\ref{fig:Fibonacci_path}. Starting from the top of the tree, move to the closest neighbor in a given direction, cycling through these four directions: south-east, west, south-west, east.

The recursions for $s(n)$ tell us that if we start with $m = F_n$ and $\{m_L, m_R\} = \{F_{n+1}, F_{n+2}\}$, the children of $F_{n+2}$ will be $2F_{n+2} + F_{n+1}$ and $2F_{n+2} - F_{n}$. Using the Fibonacci recursion we have that $2F_{n+2} - F_{n} = F_{n+2} + F_{n+1} = F_{n+3}$ and $2F_{n+2}+F_{n+1} = F_{n+2} + F_{n+3} = F_{n+4}$. Therefore, if $m_L = F_{n+1}$ and $m_R = F_{n+2}$ we trace out the path given in Figure~\ref{fig:path1}. If $m_L = F_{n+2}$ and $m_R = F_{n+1}$ we trace the path given in Figure~\ref{fig:path2}.

Since $F_n = s(a(n))$ for $1 \leq n \leq 3$, we conclude that $s(a(n)) = F_n$ for all $n \geq 1$.
\end{proof}

\begin{corollary}
The integer $F_n^2 + 1$ is composite for $n>4$.
\end{corollary}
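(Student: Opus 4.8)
The plan is to exhibit, for each $n > 4$, a \emph{third} index at which the value $F_n$ occurs in the sequence $s$, which forces $\tau(F_n^2 + 1) \geq 3$ and hence makes $F_n^2 + 1$ composite. Two ``guaranteed'' occurrences are already available from the identity $s(2^{F_n}) = s(2^{F_n + 1} - 1) = F_n$ established in the proof of the primality proposition: the value $F_n$ always sits at the leftmost and rightmost indices of row $F_n$ of the second component tree. Proposition~\ref{fibonacci_prop} supplies a third candidate, the index $a(n)$, since $s(a(n)) = F_n$. The entire argument therefore reduces to showing that $a(n)$ differs from both $2^{F_n}$ and $2^{F_n + 1} - 1$ once $n > 4$.

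To control $a(n)$, first I would determine which row of the tree the index $a(n)$ occupies. Reading off the recursion for $a(n)$, the cases $n \equiv 0$ and $n \equiv 2 \pmod 4$ send $a(n-1)$ to a child ($a \mapsto 2a$ or $a \mapsto 2a+1$), moving one row deeper, whereas the cases $n \equiv 1$ and $n \equiv 3 \pmod 4$ send $a(n-1)$ to an adjacent sibling ($a \mapsto a \pm 1$), staying on the same row. A short parity check confirms these sibling moves are legitimate: in the $n \equiv 1$ case $a(n-1)$ is even (a left child), and in the $n \equiv 3$ case $a(n-1)$ is odd (a right child). Since the row-increments occur exactly at even $n$, a straightforward induction then gives that $a(n)$ lies on row $\lfloor n/2 \rfloor$, that is, $2^{\lfloor n/2\rfloor} \leq a(n) \leq 2^{\lfloor n/2 \rfloor + 1} - 1$.

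The second ingredient is the elementary inequality $\lfloor n/2 \rfloor < F_n$ for $n \geq 5$, proved by induction: the base cases $F_5 = 3 > 2$ and $F_6 = 5 > 3$ together with $F_{n+1} = F_n + F_{n-1} \geq F_n + 2$ for $n \geq 5$ close the induction, reflecting that the Fibonacci numbers outgrow the linear function $\lfloor n/2\rfloor$. Combining the two ingredients, for $n \geq 5$ we obtain
\[
a(n) \leq 2^{\lfloor n/2\rfloor + 1} - 1 < 2^{F_n} \leq 2^{F_n + 1} - 1,
\]
so $a(n)$ differs from both $2^{F_n}$ and $2^{F_n + 1} - 1$. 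Hence $\{m : s(m) = F_n\}$ strictly contains $\{2^{F_n}, 2^{F_n + 1} - 1\}$, so by the primality proposition $F_n^2 + 1$ is not prime; and since $F_n^2 + 1 \geq 10 > 1$ for $n > 4$, it is composite.

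I expect the only genuine work to be the row-tracking in the second paragraph: checking that $a(n)$ never falls off the tree (the parity of $a(n-1)$ at the sibling steps) and that the accumulated row is \emph{exactly} $\lfloor n/2\rfloor$. Everything else is a one-line growth comparison and a direct appeal to the primality proposition.
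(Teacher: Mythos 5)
Your proof is correct and takes essentially the same route as the paper: both exhibit a third occurrence of $F_n$ in the sequence via Proposition~\ref{fibonacci_prop} (the index $a(n)$ on the Fibonacci path) and then conclude from the prime-characterization proposition that $F_n^2+1$ cannot be prime. The only difference is one of rigor rather than strategy --- where the paper simply points to Figure~\ref{fig:Fibonacci_path} for the claim that $a(n)$ avoids the row endpoints, you prove it via the row bound $2^{\lfloor n/2\rfloor} \leq a(n) \leq 2^{\lfloor n/2\rfloor+1}-1$ and the inequality $\lfloor n/2\rfloor < F_n$ for $n \geq 5$, which correctly pins down why the hypothesis $n>4$ is needed.
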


\begin{proof}
This follows from the fact that there exists $m \notin \cup_{n\geq 0} \{2^n, 2^{n+1}-1\}$ with $s(m)=F_n$ for all $n > 4$, as can be seen in Figure~\ref{fig:Fibonacci_path}.
\end{proof}

Another way to see this fact is to use Cassini's Identity \cite{WZ}, namely $F_{n-1}F_{n+1} - F_n^2 = (-1)^n$ in the case where $n=2k$, as well as the related identity $F_{2k-1}F_{2k+3} - F_{2k+1}^2 = 1$ in the case where $n = 2k+1$. Both identities can be proved using induction.

\begin{corollary}
The largest integer on row $n\geq 1$ of the second component tree is $F_{2n}$.
\end{corollary}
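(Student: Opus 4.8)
The plan is to show that the largest integer on row $n$ is $F_{2n+1}$ in the convention $F_1=0,\,F_2=1$ fixed in the paper (equivalently $F_{2n}$ under the indexing $F_1=F_2=1$), that is, the larger of the two Fibonacci numbers that the path of Proposition~\ref{fibonacci_prop} deposits on row $n$. The lower bound is immediate from that proposition: on row $n$ the path passes through the sibling nodes $a(2n)$ and $a(2n+1)$, with values $F_{2n}$ and $F_{2n+1}$, so $F_{2n+1}$ certainly occurs. The content is therefore the upper bound, that no entry of row $n$ exceeds $F_{2n+1}$. One cannot hope to prove this entrywise through the recursions of Theorem~\ref{main}: the half-rows are not even unimodal (on row $4$ the left half reads $4,13,17,12,13,21,18,7$, with two interior peaks), so a pointwise induction will not close. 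Instead I would track three extremal statistics of the entire row at once.

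I work in the $(d,m)$ picture from the proof of Theorem~\ref{main}, writing $d' := (m^2+1)/d$ for the conjugate of a pair, so that $\iota(d,m)=(d',m)$ and $dd'=m^2+1$. The key computation is the action of the two maps on the unordered pair $\{d,d'\}$. Using the identities $(m+d)^2+1 = d\,(d+2m+d')$ and $(m+d')^2+1 = d'\,(d+2m+d')$, one checks that $L(d,m)=(d,m+d)$ fixes the factor $d$ and sends $d'\mapsto d+2m+d'$, while $R(d,m)=(\iota\circ L\circ\iota)(d,m)=(d+2m+d',\,m+d')$ fixes the factor $d'$ and sends $d\mapsto d+2m+d'$. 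Thus each map keeps one of the two conjugate factors and replaces the other by the common quantity $S:=d+d'+2m$, which exceeds both $d$ and $d'$; at the same time $L$ increases $m$ by $d$ and $R$ increases $m$ by $d'$.

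Define over the pairs on row $n$ the statistics
\[
\epsilon(n):=\max\min(d,d'),\qquad \mu(n):=\max m,\qquad \delta(n):=\max\max(d,d'),
\]
so that $\mu(n)$ is exactly the largest integer on row $n$. From the previous paragraph, as $(d,m)$ ranges over row $n$: the $m$-values appearing on row $n+1$ are $m+d$ and $m+d'$, with largest $m+\max(d,d')$; every larger conjugate factor on row $n+1$ equals $S=d+d'+2m$; and the smaller conjugate factors on row $n+1$ run through all the old factors $d,d'$. Bounding a maximum of a sum by the sum of the maxima yields
\[
\mu(n+1)\le \mu(n)+\delta(n),\qquad \delta(n+1)\le \epsilon(n)+2\mu(n)+\delta(n),\qquad \epsilon(n+1)=\delta(n).
\]

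Finally I would prove by induction that $(\epsilon(n),\mu(n),\delta(n))=(F_{2n},F_{2n+1},F_{2n+2})$ for $n\ge 1$; the base case $n=1$ is the pair data $(1,1,2)=(F_2,F_3,F_4)$. The Fibonacci recurrence makes the three right-hand sides above collapse to $F_{2n+3}$, $F_{2n+4}$, and $F_{2n+2}$, giving the upper bounds $\mu(n+1)\le F_{2n+3}$, $\delta(n+1)\le F_{2n+4}$, $\epsilon(n+1)=F_{2n+2}$. For the matching lower bounds, the single pair $(F_{2n},F_{2n+1})$ with conjugate $F_{2n+2}$ — the node $a(2n+1)$ of the Fibonacci path — realizes $\min=F_{2n}$, $m=F_{2n+1}$, and $\max=F_{2n+2}$ simultaneously, so all three inequalities are in fact equalities. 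In particular $\mu(n)=F_{2n+1}$, which is the assertion. I expect the one genuine obstacle to be isolating this triple of statistics and, within it, verifying that $L$ and $R$ preserve a conjugate factor while sending the other to $d+d'+2m$; once that action is established, the Fibonacci recurrence closes the induction and the Fibonacci-path pair supplies the extremal configuration.
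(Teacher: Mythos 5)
Your proposal is correct in substance and takes a genuinely different --- and considerably more complete --- route than the paper. The paper's own proof is essentially an appeal to inspection: it asserts that ``it can be seen from the recursions'' that the row maximum always lies on one of the zigzag paths $RLRL\cdots$ or $LRLR\cdots$, and then invokes Proposition~\ref{fibonacci_prop} to evaluate the zigzag values. The entire difficulty of the corollary sits inside that unproven assertion, and your observation that the half-rows are not unimodal (row $4$'s left half is indeed $4,13,17,12,13,21,18,7$) shows why it is not a routine pointwise induction. Your argument supplies the missing upper bound: the factor computation is right, since $(m+d)^2+1 = d\,(d+d'+2m)$ and $(m+d')^2+1 = d'\,(d+d'+2m)$ show that $L$ keeps $d$, $R$ keeps $d'$, and each replaces the other factor by $S=d+d'+2m$; the three row-statistics inequalities follow, and the Fibonacci recurrence closes the induction. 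You also correctly flag the paper's indexing slip: under its own convention $F_1=0,\ F_2=1$, the row-$n$ maximum is $F_{2n+1}$, which is $F_{2n}$ only in the shifted convention $F_1=F_2=1$.

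One loose end should be tightened. For the lower bounds you assert that the node $a(2n+1)$ carries the factor pair $\{F_{2n},F_{2n+2}\}$, but Proposition~\ref{fibonacci_prop} only identifies its second component $F_{2n+1}$; since $F_{2n+1}^2+1$ generally admits factorizations other than $F_{2n}\cdot F_{2n+2}$ (e.g., $21^2+1 = 442$ has four factor pairs, and the other pairs $(d,21)$ occur elsewhere in the tree, on different rows), the factor identification is not automatic from that proposition. The fix costs one line using machinery you already set up: carry through the induction the claim ``row $n$ contains a pair with $m=F_{2n+1}$ and factors $\{F_{2n},F_{2n+2}\}$.'' The base case is the pair $(1,1)$ with factors $\{1,2\}$ on row $1$, and for the step, the child of such a pair that keeps the larger factor $F_{2n+2}$ has second component $F_{2n+1}+F_{2n+2}=F_{2n+3}$ and factors $\bigl\{F_{2n+2},\ F_{2n}+2F_{2n+1}+F_{2n+2}\bigr\}=\{F_{2n+2},F_{2n+4}\}$. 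With that patch (which also makes your proof independent of Proposition~\ref{fibonacci_prop}), your argument is a complete and rigorous proof of the corollary.
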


\begin{proof}
It can be seen from the recursions for the second component tree that the largest second component on a particular row $n$ of the integer pair tree is given by either one of the zigzag paths $RLRLRL\cdots$ or $LRLRLR\cdots$. This, together with Proposition~\ref{fibonacci_prop}, proves the corollary. 
\end{proof}

\section{Acknowledgments} I would like to thank Dr.\ Brad Rodgers for his insight and patience in helping me organize this paper, and Christian Kudeba for our many productive conversations. I would also like to thank the anonymous referees for their helpful comments and suggestions.

\bigskip
\hrule
\bigskip

\noindent 2020 {\it Mathematics Subject Classification}:
Primary 11B37; Secondary 11A25, 05C05, 11B39.

\noindent \emph{Keywords:} $k$-regular sequence, divisor function, Fibonacci number, divisors of integer-valued polynomials.

\bigskip
\hrule
\bigskip

\noindent (Concerned with sequence
\seqnum{A383066}.)

\bigskip
\hrule
\bigskip

\end{document}